\documentclass[11pt]{article}
\usepackage{amsthm} 

\usepackage{mathtools}
\DeclarePairedDelimiter{\norm}{\lVert}{\rVert}

\usepackage{amsmath}
\usepackage{amsfonts}
\usepackage{amssymb}
\usepackage{hyperref}
\usepackage[utf8]{inputenc}
\usepackage[T2A]{fontenc}
\hypersetup{colorlinks,
citecolor=black,
filecolor=black,
linkcolor=black,
urlcolor=black}
\usepackage{graphics,epstopdf}
\usepackage[pdftex]{graphicx}
\usepackage{newlfont}\newlength{\defbaselineskip}
\usepackage[left=1in,right=1in,top=1in,bottom=0.8in,footskip=0.25in]{geometry}
\usepackage{setspace}
\usepackage{subfigure}
\usepackage{enumerate}

\usepackage{authblk}

\usepackage{cancel}

\allowdisplaybreaks
\newtheorem{theorem}{Theorem}
\newtheorem{example}[theorem]{Example}
\newtheorem{lemma}[theorem]{Lemma}
\newtheorem{definition}[theorem]{Definition}

\newcommand{\calN}{\mathcal{N}}
\newcommand{\R}{\mathbb{R}}

\usepackage[most]{tcolorbox}

\begin{document}

\date{\today}
\title{A sufficient condition for the existence of solutions of singular linear-quadratic vector equations.}

\author[1]{Rishikesh Yadav}
\author[1]{Axel Flinth}
\affil[1]{Department of Mathematics and Mathematical Statistics, Umeå University, Sweden}

\maketitle

\begin{abstract}
    We study the existence of solutions for systems of linear-quadratic vector equations with singular linear parts. We derive a sufficient condition for small right hand sides. 
\end{abstract}

\textbf{Keywords:} Quadratic vector equations, Bilinear systems, Existence of solutions

\textbf{MSC 2020:} 15A63,15A99

\section{Introduction}

Consider an equation of the form
\begin{align}\label{eq:bilinear}
\mathcal{A}u+\mathcal{S}(u,u)=\mathcal{C}.
\end{align}
 Here, $U$ and $V$ are finite-dimensional real vector spaces, $\mathcal{A} : U \to V$ is a linear operator, $\mathcal{S}:U \times U \to V$ is bilinear, and $\mathcal{C}\in V$ is a vector. Such equations naturally arise  as quadratic perturbations of linear operator equations. 
Alternatively, they emerge as Taylor approximations to more general non-linear equations. 
We are interested in the case when the \emph{linear operator} $\mathcal{A}$ is \emph{not invertible} and the \emph{right hand side} $\mathcal{C}$ \emph{is small}.

\paragraph{Motivation} Our motivation to study equations of the form \eqref{eq:bilinear} is the application of so-called \emph{homotopy methods} \cite{allgower1993continuation, avila1974feasibility} to solve non-linear equations of the type
\begin{align*}
    F(u_\lambda) = g(\lambda).
\end{align*}
Here, $F:U \to U$ is a non-linear map, and $g:\mathbb{R}\to U$ is a map depending on a parameter $\lambda\in \mathbb{R}$. The homotopy method starts at an (easy to obtain) solution $u_{\lambda_0}$ for a particular $\lambda_0$, and then follows the trajectory $u_\lambda$ of solutions as $\lambda$ is increased (decreased). It is straightforward to show that as long as $F'(u_\lambda)$ is invertible, that trajectory can be obtained as the solution of the differential equation$
    \tfrac{\mathrm{d}}{\mathrm{d} \lambda} u_\lambda = F'(u_\lambda)^{-1}g'(\lambda)$.
The above approach breaks however when $F'(u_{\lambda_0})$ is singular. To overcome this issue, it is natural to switch to solving an equation
\begin{align*}
    F(u_{\lambda_0}) + F'(u_{\lambda_{0}}) \Delta u_\lambda + \tfrac{1}{2}F''(u_{\lambda_0})[\Delta u_\lambda, \Delta u_\lambda] = g(\lambda_0) + g'(\lambda_0) \Delta_\lambda,
\end{align*}
for small values of $\Delta \lambda$. Since $F(u_{\lambda_0})=g(\lambda_0)$, this turns into an equation of the form \eqref{eq:bilinear} where  $\mathcal{S} = \tfrac{1}{2} F''(u_\lambda)$, $\mathcal{A}=F'(u_{\lambda_0})$ is singular, and $\mathcal{C}= g'(\lambda) \Delta_\lambda$ is small. \newline

Note that when $\mathcal{A}$ is invertible, the well-known implicit function theorem  (see for example \cite{kielhofer2012bifurcation}) immediately shows the existence of a solution to \eqref{eq:bilinear} for small $\mathcal{C}$. When $\mathcal{A}$ is not invertible, however,  there appears to be no unified treatment for equations of the type \eqref{eq:bilinear}. 
The present work aims to partially fill this gap. 
We derive a sufficient condition on the relation between $\mathcal{S}$ and $\mathcal{A}$, that guarantees the existence of the solution for $\mathcal{C}$ small enough. 

\paragraph{Related work}
Meini and Poloni \cite{Meini2011} studied \emph{quadratic vector equations} (QVEs), equations of the form $x=a + b(x, x)$, where $b(u,v)=B(u\otimes v)$ is a bilinear mapping. 
This work was further developed from a computational perspective in \cite{Bini2011}. 
Later, a more general study of QVEs was presented in \cite{Poloni2013}, which solved the problem of a special case for a general form of the equation $Mx=a+b(x,x)$, where $a$ and $x$ are componentwise non-negative vectors,  $b$ is a bilinear map from pairs of non-negative vectors to non-negative vectors, and  $M$ is a \emph{nonsingular $M$-matrix}. Motivated by this work, \cite{Guo2015} estimated the minimal non-negative solution of QVEs,  while Sta\'nczy \cite{Stanczy2013} 
investigated a method for nonlinear equations involving bilinear, coercive, and compact forms.

Closely related to our setting are also systems of \emph{bilinear equations} 
\begin{align*}
    \mathcal{S}(u,v) = \mathcal{C}.
\end{align*}
A specialized framework for solving such systems exactly was presented in \cite{Cohen1997}.  In addition, \emph{least-squares} approaches for computing approximate solutions were studied in \cite{Bai2005}. An abstract solution theory over general fields  $\mathbb{F}$ 
was developed in \cite{Johnson2009}. However, this framework concerns \emph{complete} systems, which necessitate that the number of equations exceed $pq$, where $u\in \mathbb{F}^p$ and $v \in \mathbb{F}^q$. 
In particular, the solvability of BLS over finite fields was investigated in \cite{Vinh2009}, with related recent developments in \cite{Pham2025}. More recently, Huth and Joux  \cite{Huth2024} investigated Multi-Party Computation (MPC) using the subfield bilinear collision problem, in which the authors formulated bilinear systems in polynomial form over a finite field and solved them using Gr\"{o}bner basis algorithms \cite{Faugere2002}.



\section{Main result}\label{sec:main}

 Before presenting our main result, let us reformulate \eqref{eq:bilinear} into a convenient form. Let $\Pi$ be the orthogonal projection onto the range of $\mathcal{A}$. If we define $S = (\mathrm{id} - \Pi) \mathcal{S}$, it is clear that in order for \eqref{eq:bilinear} to have a solution, it is necessary that the projected equation
    \begin{align*}
          S(u,u) &= (\mathrm{id}-\Pi)\mathcal{C}.
    \end{align*}
    has one.  If $u_0$ is such a solution,  by redefining $u = u_0 + \tilde{u}$, equivalently we can rewrite \eqref{eq:bilinear} to
    \begin{align*}
        \mathcal{A}\tilde{u} + \mathcal{S}(u_0,\tilde{u}) + \mathcal{S}(\tilde{u},u_0)  + S(\tilde{u},\tilde{u}) = \mathcal{C}- \mathcal{A}u_0 - S(u_0,u_0). 
    \end{align*}
    Define $A\tilde{u} = \Pi( \mathcal{A}\tilde{u} + \mathcal{S}(u_0,\tilde{u}) + \mathcal{S}(\tilde{u},u_0))$, $\sigma = \Pi \mathcal{S}$, $c= \Pi (\mathcal{C}- \mathcal{A}u_0 - S(u_0,u_0) )$ and $\eta = (\mathrm{id}-\Pi)\mathcal{S}$. Note that $c$ is small when $\mathcal{C}$ is small: More formally,  $\lim_{\mathcal{C}\to 0} u_0=0$, so $\lim_{\mathcal{C}\to 0} c =0 $. By considering the components of the above equation in $\mathrm{ran}(A)$ and its orthogonal complement separately, we see that it decomposes into
    \begin{align*}
        A\tilde{u} + \sigma(\tilde{u},\tilde{u}) = c, \quad 
        \eta(\tilde{u},\tilde{u})= (\mathrm{id}-\Pi)(\mathcal{C}- \mathcal{A}u_0 - S(u_0,u_0)) = (\mathrm{id}-\Pi)(\mathcal{C}- \mathcal{S}(u_0,u_0))=0.
    \end{align*}
    We will therefore here forth treat problems of the following form:

    \begin{tcolorbox}
    For $A$ linear, $\sigma$ and $\eta$ bilinear,  $c\in \mathrm{ran}(A)$ small and  $\sigma(u,u) \in \mathrm{ran} (A)$ for all $u\in U$, solve 
    \begin{align} 
        Au + \sigma(u,u)&=c \label{eq:mainlin1} \\
        \eta(u,u) & = 0. \label{eq:mainlin2}
    \end{align}
    \end{tcolorbox}

Before stating and proving the main result, let us define another shorthand.
\begin{definition}
    For fixed $u \in U$, we define the linear operator 
    $$\mathcal{N}(u): \ker(A) \to \mathrm{ran}(\eta), d \mapsto \eta(u,d) + \eta(d,u).$$
    Here, we used the obvious short-hand $\mathrm{ran}(\eta) = \{\eta(u,v) \, \vert \, u,v \in U\} $.
\end{definition}
Since we will use it later, let us remark that $\calN(u)$ obviously is linearly dependent on $u\in U$.


We are now ready to state our main result.
\begin{theorem}
    \label{th:mainresult} Consider a system of equations of the form \eqref{eq:mainlin1}--\eqref{eq:mainlin2}. Assume that 
    \begin{enumerate}[(i)]
        \item $\eta(d,d) =0$ for all $d \in \ker(A)$,
        \item There exists a vector $w$ with $Aw=c$ so that $\calN(w)$ is invertible.
    \end{enumerate}
    Then, \eqref{eq:mainlin1}--\eqref{eq:mainlin2} has a solution for $c$ \emph{small enough}. That is, for every $c_0$ that fulfills $(ii)$, there exists a $t_0>0$ so that \eqref{eq:mainlin1}-\eqref{eq:mainlin2} has a solution for $c=tc_0$ for $t\in [0,t_0[$. 
\end{theorem}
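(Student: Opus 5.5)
The plan is to rescale the unknown so that the degenerate problem becomes a regular one at $t=0$, and then to apply the implicit function theorem. Fix $c_0$ and a vector $w_0$ with $Aw_0=c_0$ such that $\calN(w_0)$ is invertible, as in (ii). Put $K=\ker(A)$ and let $K^\perp$ be its orthogonal complement in $U$. Let $W$ denote the linear span of $\mathrm{ran}(\eta)$; the values of $\eta$, and hence of every $\calN(u)$, lie in $W$. Read in this way, invertibility of $\calN(w_0)$ means $\calN(w_0)\colon K\to W$ is a linear bijection. For $c=tc_0$ I would look for a solution of the form
\begin{align*}
u = t\,(w_0 + r + d), \qquad r\in K^\perp,\ d\in K .
\end{align*}
For $t=0$ the choice $u=0$ trivially solves the system, so it suffices to treat $t\neq 0$.

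Next I substitute the ansatz into \eqref{eq:mainlin1}--\eqref{eq:mainlin2}. Since $Ad=0$ and $Aw_0=c_0$, dividing \eqref{eq:mainlin1} by $t$ gives
\begin{align*}
Ar + t\,\sigma(w_0+r+d,\,w_0+r+d) = 0 .
\end{align*}
Dividing \eqref{eq:mainlin2} by $t^2$ and using bilinearity together with assumption (i), which says $\eta(d,d)=0$, gives
\begin{align*}
\eta(w_0+r,\,w_0+r) + \calN(w_0+r)\,d = 0 .
\end{align*}
Define the polynomial, hence smooth, map
\begin{align*}
G\colon \R\times K^\perp\times K \to \mathrm{ran}(A)\times W
\end{align*}
whose two components are the left-hand sides above. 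It really lands in $\mathrm{ran}(A)\times W$ because of the standing hypothesis $\sigma(u,u)\in\mathrm{ran}(A)$ and because $\eta$ takes values in $W$.

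At $t=0$ the first equation reduces to $Ar=0$. Since $A$ restricted to $K^\perp$ is injective, this forces $r=0$. The second equation then reads $\eta(w_0,w_0)+\calN(w_0)d=0$, so it has the unique solution
\begin{align*}
d_0=-\calN(w_0)^{-1}\eta(w_0,w_0).
\end{align*}
Thus $G(0,0,d_0)=0$. The partial derivative of $G$ with respect to $(r,d)$ at $(0,0,d_0)$ is block lower triangular: its $(1,1)$ block is $A|_{K^\perp}\colon K^\perp\to\mathrm{ran}(A)$, its $(1,2)$ block is $0$ (that block carries a factor $t$), and its $(2,2)$ block is $\calN(w_0)\colon K\to W$. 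Both diagonal blocks are bijections, so the derivative is invertible. The implicit function theorem then yields $t_0>0$ and a continuous curve $t\mapsto(r(t),d(t))$ on $[0,t_0[$ with $G(t,r(t),d(t))=0$. Consequently $u(t)=t\,(w_0+r(t)+d(t))$ solves the system for $c=tc_0$, and $u(0)=0$ covers the endpoint.

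The main obstacle, as I see it, is the step that makes the linearization square and nondegenerate. This requires two things. First, the codomain of the $\eta$-equation must be identified with the span $W$, which is where the invertibility of $\calN(w_0)$ gives $\dim K=\dim W$. Second, assumption (i) is exactly what removes the quadratic term $\eta(d,d)$: without it the $d$-equation would not be affine in $d$, and its derivative at $d_0$ would pick up the extra term $2\eta(d_0,\cdot)$, which could destroy invertibility. The rescaling $u=t(\cdot)$ is what decouples the blocks at $t=0$, since it pushes the $\sigma$-term to order $t$. I would also double-check that dividing by $t$ and $t^2$ loses no solutions and is harmless for $t\neq0$; this is immediate from the construction.
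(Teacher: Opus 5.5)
Your proof is correct, but it takes a different route from the paper's.

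\textbf{The paper's route.} The paper never rescales. It builds the map $\varphi(u)=\Psi(B(c-\sigma(u,u)))$, where $\Psi(v)=v-\calN(v)^{-1}\eta(v,v)$ and $B$ is a generalized inverse of $A$ with $Bc=w$. It checks that fixed points of $\varphi$ solve the system. Assumption (i) enters there exactly as in your computation, by killing $\eta(\alpha,\alpha)$ for $\alpha\in\ker A$. It then combines a perturbation bound for $\calN(w)^{-1}$ with the estimate $\|B\sigma(u,u)\|\le K(\|w\|^2+\varepsilon^2)$. This shows that $\varphi$ maps a ball of radius $\varepsilon$ around $\Psi(w)$ into itself, and Brouwer applies.

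\textbf{Same target point.} Your base solution $w_0+d_0=w_0-\calN(w_0)^{-1}\eta(w_0,w_0)$ is precisely $\Psi(w_0)$. Also $\Psi(tw_0)=t\Psi(w_0)$ is the center of the paper's ball.

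\textbf{What your blow-up does.} Setting $u=t(w_0+r+d)$ makes the problem regular. It pushes $\sigma$ to order $t$, and together with (i) it makes the $\eta$-equation affine in $d$. The linearization is then block triangular with diagonal blocks $A|_{K^\perp}$ and $\calN(w_0)$.

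\textbf{What each approach buys.} Your route is shorter, with none of the paper's bookkeeping of $\delta$, $K$, $M$ and $\varepsilon$. It also gives strictly more: a real-analytic branch $u(t)=t\Psi(w_0)+O(t^2)$ for $t$ on both sides of $0$, and local uniqueness of solutions with $u/t$ near $\Psi(w_0)$. Brouwer gives existence only. In turn, the paper produces an explicit map whose iteration is what its numerical section runs. Its estimates can also in principle be tracked to an explicit $t_0$ in terms of $L$, $M$, $\delta$, whereas the plain implicit function theorem gives $t_0$ only qualitatively.

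A small aside: in your closing remark, the extra derivative term without (i) would be $\eta(d_0,\cdot)+\eta(\cdot,d_0)$, not $2\eta(d_0,\cdot)$, since $\eta$ need not be symmetric. This does not affect the proof.
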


We postpone the proof to the next section, since it is quite technical. Let us here instead show how the theorem can be applied for a concrete example.
\begin{example}\label{ex:1}
    Let 
    \begin{align*}
        A = \begin{bmatrix}
            A^0 &0
        \end{bmatrix}_{(m+k)\times(m+k)},
    \end{align*}
with $A^0\in \mathbb{R}^{m+k,m}$ of full rank. Then we have
\begin{align*}
    \ker(A) =\big\{(0,y): y\in\mathbb{R}^k \big\}, \quad  \mathrm{ran}(A)  = \mathrm{span}(a^0_i, i \in [m]),
\end{align*}
where $a^0_i$ denotes the columns of $A_0$. For some bilinear forms $\sigma_i : U \times U\to \R$, $i=1,\dots, m$ define $$\sigma(u,u)=\sum_{i} \sigma_i(u,u)b_i\in \R^{m},$$ and for some linear form $\ell: \R^m \to \R$ and bilinear map $\mathcal{B}:\R^m\times \R^m \to \R^k$, the bilinear form $\eta:\R^{m+k}\times \R^{m+k}\to \R^k$ through
\begin{align*}
    \eta(u,v) = \tfrac{1}{2}(\ell(u_1)v_2 + \ell(v_1)u_2) + \mathcal{B}(u_1,v_1),
\end{align*}
where we write $\R^{m+k}\ni u=[u_1,u_2]$ for $u_1\in \R^m$, $u_2\in \R^k$. 

We claim that the resulting system of equations fulfills the conditions of Theorem \ref{th:mainresult} for all $c=A^0\gamma$, $\gamma \in \R^m$ with $\ell(\gamma)\neq 0$. First, it is clear that $\sigma(u,u) \in \mathrm{ran} A$ for all $u$, and likewise that $c\in \mathrm{ran} A$. For $d\in \ker A$, we have $d_1=0$, and consequently,
\begin{align*}
    \eta(d,d) = \ell(0)d_2 + \mathcal{B}(0,0)=0,
\end{align*}
so that (i) is satisfied. As for (ii), first note that for $d=[0,d_2]$ and $u=[u_1,u_2]$, we  have
\begin{align*}
    \calN(u)d = \eta(u,d) +\eta(d,u) = \ell(u_1)d_2 + \ell(0)u_2 + \mathcal{B}(0,u_1)  +\mathcal{B}(u_1,0) = \ell(u_1)d_2.
\end{align*}
Hence, $\calN(u)$ is invertible  exactly when $\ell(u_1)\neq 0$. Since $A^0u=A^0\gamma$ if and only if $u_1=\gamma$, the claim follows.

\end{example}

\section{Proof of the main result}
The main idea of the proof will be to apply the Brouwer fixed-point theorem \cite{Brouwer1911}.  
It states that a continuous map that maps a compact convex subset of a finite dimensional-subset onto itself always has a fixed point. We first need to construct said map, beginning with the following:
\begin{lemma}\label{lem:semiinverse}
    Let $A: U \to V$ be a linear operator and $c\in \operatorname{ran} A$. Then, for any solution $w$ of $Aw=c$, there exists an operator $B:V\to U$ with $Bc=w$ and $ABA=A$, or equivalently  $AB\vert_{\mathrm{ran} (A)}=\mathrm{id}$.
\end{lemma}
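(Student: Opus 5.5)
We will prove Lemma~\ref{lem:semiinverse} by writing down $B$ explicitly: we fix a generalized inverse of $A$ and then correct it on the single vector $c$ so that it sends $c$ to $w$. The case $c=0$ is slightly degenerate and is handled separately.

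First we build a right inverse of $A$ on its range. Choose a complement $W$ of $\ker(A)$ in $U$, say the orthogonal complement. The restriction $A\vert_W : W\to \mathrm{ran}(A)$ is then a linear bijection. Let $A^\dagger : V \to U$ be $(A\vert_W)^{-1}\circ\Pi$, where $\Pi$ is the orthogonal projection onto $\mathrm{ran}(A)$; this is the Moore--Penrose pseudoinverse. It satisfies $AA^\dagger y = y$ for every $y\in \mathrm{ran}(A)$. Hence $AA^\dagger A = A$.

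Next we adjust $A^\dagger$ so that $Bc = w$. Suppose $c\neq 0$. Since $Aw = c = AA^\dagger c$, the vector $d := w - A^\dagger c$ lies in $\ker(A)$. Pick a linear functional $\varphi$ on $V$ with $\varphi(c)=1$, for instance $\varphi(y) = \langle c, y\rangle/\|c\|^2$. Define
\begin{align*}
B y = A^\dagger y + \varphi(y)\, d .
\end{align*}
Then $Bc = A^\dagger c + d = w$. Moreover $AB y = AA^\dagger y + \varphi(y)Ad = AA^\dagger y$, which equals $y$ for $y\in\mathrm{ran}(A)$. So $AB\vert_{\mathrm{ran}(A)} = \mathrm{id}$, and in particular $ABA = A$.

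If $c = 0$, any $B$ is linear and so $Bc = 0$. For $Bc = w$ to hold we then need $w = 0$. The statement must be read with $w=0$ in this case, and $B = A^\dagger$ works. In the application $c = tc_0$ with $c_0\neq 0$, so only the case $c\neq 0$ matters.

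Finally, the claimed equivalence of the two conditions is routine. If $ABA = A$, then for $y = Ax$ we get $ABy = ABAx = Ax = y$, so $AB\vert_{\mathrm{ran}(A)} = \mathrm{id}$. Conversely, if $AB\vert_{\mathrm{ran}(A)} = \mathrm{id}$, apply it to $Ax$ to get $ABAx = Ax$.

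The only real obstacle is noticing that the correction $d$ must lie in $\ker(A)$, so that adding it does not destroy $ABA = A$. This follows because $w$ and $A^\dagger c$ both solve $Ax = c$.
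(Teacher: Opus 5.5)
Your proof is correct, but your construction of $B$ differs from the paper's. The paper works with a basis. It extends $\{c\}$ to a basis $\{c,p_2,\dots,p_n\}$ of $V$ whose first $k$ vectors span $\mathrm{ran}(A)$, picks preimages $u_i$ of $p_i$ with $u_1=w$, and sets $B p_i=u_i$ for $i\le k$ and $B p_i=0$ otherwise.

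You instead start from a canonical generalized inverse $A^\dagger$ and add the rank-one correction $y\mapsto\varphi(y)\,(w-A^\dagger c)$. Its values lie in $\ker(A)$, so $AB=AA^\dagger$ and $Bc=w$ both hold. The paper's construction needs no inner product at all. Yours uses the Euclidean structure as written, but that is inessential: any complement of $\ker(A)$, any projection onto $\mathrm{ran}(A)$, and any functional with $\varphi(c)=1$ would do.

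What your route buys is transparency. It shows that the required $B$ is just a fixed generalized inverse modified by a term with values in $\ker(A)$. This also connects to the choice $B=A^\dagger$ made in the numerical section.

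Your handling of $c=0$ is more careful than the paper's. The step ``extend $\{c\}$ to a basis'' tacitly requires $c\neq 0$. As you note, the lemma as literally stated fails for $c=0$ and $w\in\ker(A)\setminus\{0\}$, since $B0=0$ for any linear $B$.

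One small point about your remark on the application: the theorem does allow $t=0$, i.e.\ $c=0$. There, however, $u=0$ solves the system trivially (equivalently, take $w=0$ and $B=A^\dagger$), so your conclusion stands.
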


\begin{proof}
    We want to construct $B:V \to U$, such that $Bc=w$ and $ABA = A$. Extend $\{c\}$ to a basis $\{c,p_2, \dots, p_k, \dots p_n\}$ of $V$, with $\mathrm{span}(c,p_2, \dots,p_k) = \mathrm{ran}(A)$. Now choose $u_i \in U$ with $Au_i=p_i$, with the specific choice $u_1 =w$, and define
        $B\left(\sum_{i=1}^n \lambda_i p_i\right) = \sum_{i=1}^k \lambda_i u_i$. 
    Now, for $p = \sum_{i=1}^k\mu_i p_i \in \mathrm{ran} A$ arbitrary we can calculate $ABp = A\left(\sum_{i=1}^k\mu_i u_i\right) = \sum_{i=1}^k\mu_i p_i = p$, 
    and particularly $B c = u_1=w$. The claim is proven.
\end{proof}

The proof of Theorem \ref{th:mainresult} is based on the Brouwer fixed-point theorem. Let us begin by showing that we can rewrite \eqref{eq:mainlin1}--\eqref{eq:mainlin2} into a such.
\begin{lemma}
    For $u\in U$ in the set $S = \{u \, \vert \, \calN(u) \text{ invertible. }\}$, define
    \begin{align*}
        \alpha : \mathrm{ran}(\eta) \to \ker A, u \mapsto \calN(u)^{-1}\eta(u,u), \quad \Psi : U \to U, u \mapsto u - \alpha(u).
    \end{align*}
    Now, for a given $c\in \mathrm{ran}(A)$ and solution $w$ to $Aw=c$, let $B$ be as in Lemma \ref{lem:semiinverse}, and define
    \begin{align*}
        \varphi(u) = \Psi(B(c-\sigma(u,u))). 
    \end{align*}
    Then, if
    \begin{align} \label{eq:fixedpoint}
         u = \varphi(u).
    \end{align}
    has a solution,  \eqref{eq:mainlin1}--\eqref{eq:mainlin2} has one also.
\end{lemma}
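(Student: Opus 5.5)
The plan is to take a fixed point $u=\varphi(u)$ and check the two equations \eqref{eq:mainlin1} and \eqref{eq:mainlin2} directly. Implicit in the statement is that $\varphi(u)$ is defined. This means the intermediate vector
\begin{align*}
v := B(c-\sigma(u,u))
\end{align*}
lies in $S$, so that $\calN(v)$ is invertible and $\alpha(v)$ makes sense. I will also use hypothesis (i) of Theorem \ref{th:mainresult}, namely $\eta(d,d)=0$ for $d\in\ker A$; without it the second equation cannot close. Write $d:=\alpha(v)=\calN(v)^{-1}\eta(v,v)\in\ker A$. The fixed-point identity then reads $u = v - d$.

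\emph{First equation.} Since $d\in\ker A$, we have $Au = Av = AB(c-\sigma(u,u))$. By the standing assumptions, $c\in\mathrm{ran}(A)$ and $\sigma(u,u)\in\mathrm{ran}(A)$, so $c-\sigma(u,u)\in\mathrm{ran}(A)$. Lemma \ref{lem:semiinverse} gives $AB\vert_{\mathrm{ran}(A)}=\mathrm{id}$, hence $Au=c-\sigma(u,u)$, which is \eqref{eq:mainlin1}.

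\emph{Second equation.} Expand by bilinearity:
\begin{align*}
\eta(u,u)=\eta(v-d,v-d)=\eta(v,v)-\big(\eta(v,d)+\eta(d,v)\big)+\eta(d,d).
\end{align*}
The middle term is exactly $\calN(v)d$. Since $\calN(v):\ker A\to\mathrm{ran}(\eta)$ is invertible and $\eta(v,v)\in\mathrm{ran}(\eta)$, the definition of $d$ gives $\calN(v)d=\eta(v,v)$. So the first two terms cancel, and $\eta(u,u)=\eta(d,d)$. This vanishes by (i), because $d\in\ker A$. That gives \eqref{eq:mainlin2}.

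The only delicate point is bookkeeping rather than analysis. One must make sure the correction $\alpha$ is applied at $v$, not at $u$, so that the cancellation $\calN(v)d=\eta(v,v)$ is available. One must also note that assumption (i) is needed precisely to kill the leftover quadratic term $\eta(d,d)$. I expect no real obstacle beyond stating the domain requirement $v\in S$ explicitly.
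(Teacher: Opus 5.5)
Your proof is correct and matches the paper's argument essentially step for step. You use $d=\alpha(v)\in\ker A$ together with $AB\vert_{\mathrm{ran}(A)}=\mathrm{id}$ to get \eqref{eq:mainlin1}, and the expansion $\eta(v-d,v-d)=\eta(v,v)-\calN(v)d+\eta(d,d)$, with hypothesis (i) killing $\eta(d,d)$, to get \eqref{eq:mainlin2}. Two of your remarks are worth keeping: that $v=B(c-\sigma(u,u))$ must lie in $S$, which the paper leaves implicit, and that (i) is needed for the term $\eta(d,d)$ rather than for $\alpha$ to be well defined.
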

\begin{proof}
Note in particular that $\alpha$ is well-defined due to Assumption (i) of the main theorem.

Now, let $u$ solve \eqref{eq:fixedpoint}. By the definition of $\alpha$, we then have $v-\Psi(v)= \alpha(v)\in \ker A$ for all $v\in U$. In other words, $A\Psi(v)=Av$. 
Consequently, if $u=\varphi(u)$, we get
    \begin{align*}
        Au = A\varphi(u) = A\Psi(B(c-\sigma(u,u))) = AB(c-\sigma(u,u)) = c-\sigma(u,u).
    \end{align*}
    In the last line, we used the construction of $B$ from Lemma \ref{lem:semiinverse} together with $c-\sigma(u,u)\in \mathrm{ran}(A)$.
    Hence, \eqref{eq:mainlin1} holds. To show that also \eqref{eq:mainlin2} does, note that for all $v\in U$
    \begin{align*}
        \eta(\Psi(v),\Psi(v)) &= \eta(v,v) - \eta(v,\alpha(v))-\eta(\alpha(v),v)+ \eta(\alpha(v),\alpha(v)) \\
        &= \eta(v,v) - \calN(v)\alpha(v) + \eta(\alpha(v),\alpha(v)).
    \end{align*}
    Due to $\alpha(v) \in \mathrm{ker}(A)$, $\eta(\alpha(v),\alpha(v))=0$. Hence
    \begin{align*}
        \eta(v,v) - \calN(v)\alpha(v) + \eta(\alpha(v),\alpha(v)) = \eta(v,v)- \calN(v)\calN(v)^{-1}\eta(v,v)=0.
    \end{align*}
    Applying the above to $v =B(c-\sigma(u,u))$ yields \eqref{eq:mainlin2}.
\end{proof}

The last lemma shows that we can concentrate on showing the existence of a fixed point of \eqref{eq:fixedpoint}. To do that, we need the following technical lemma about the continuity properties of the matrix inverse.
\begin{lemma} \label{lem:inverse}
    Let $\gamma \in U$ be a point where $\calN(\gamma)$ is invertible. Let $\sigma_{\min}$ be the smallest singular value of $\calN(\gamma)$. That is, $\sigma_{\min} = \Vert \calN(\gamma)^{-1}\Vert^{-1}$. Then, for $s\in U$ with $\Vert \calN(s)\Vert\leq \sigma_{\min}$, we have
    \begin{align*}
        \Vert \calN(\gamma)^{-1}-\calN(\gamma-s)^{-1}\Vert \leq \frac{1}{\sigma_{\min}(\sigma_{\min} - \Vert \calN(s)\Vert)}.
    \end{align*}
\end{lemma}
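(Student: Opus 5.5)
The approach is a standard Neumann-series perturbation argument, using that $\calN$ depends linearly on its argument. Write $M=\calN(\gamma)$ and $E=\calN(s)$. By linearity, $\calN(\gamma-s)=M-E$. All operators map $\ker A\to\mathrm{ran}(\eta)$, and norms are the operator norms induced by the Euclidean norms on these spaces.

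The plan has four steps.

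\textbf{Step 1 (invertibility of $M-E$).} I would use the lower bound $\Vert Mx\Vert\geq\sigma_{\min}\Vert x\Vert$, which is the defining property of the smallest singular value. It gives
\begin{align*}
\Vert (M-E)x\Vert\geq(\sigma_{\min}-\Vert E\Vert)\Vert x\Vert .
\end{align*}
If $\Vert E\Vert<\sigma_{\min}$, then $M-E$ is injective. Since $M$ is invertible, $\dim\ker A=\dim\mathrm{ran}(\eta)$, so $M-E$ is also invertible, and
\begin{align*}
\Vert (M-E)^{-1}\Vert\leq (\sigma_{\min}-\Vert E\Vert)^{-1}.
\end{align*}
In the borderline case $\Vert E\Vert=\sigma_{\min}$, the right-hand side of the claim is $+\infty$ and there is nothing to prove. (The statement then implicitly assumes $M-E$ is invertible.)

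\textbf{Step 2 (resolvent identity).} I would write
\begin{align*}
M^{-1}-(M-E)^{-1}=M^{-1}\big((M-E)-M\big)(M-E)^{-1}=-M^{-1}E(M-E)^{-1}.
\end{align*}

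\textbf{Step 3 (bound).} Submultiplicativity then gives
\begin{align*}
\Vert M^{-1}-(M-E)^{-1}\Vert\leq \frac{\Vert E\Vert}{\sigma_{\min}(\sigma_{\min}-\Vert E\Vert)} .
\end{align*}

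\textbf{Step 4 (matching the stated bound).} This is where I expect the only real obstacle. The bound from Step 3 carries an extra factor $\Vert\calN(s)\Vert$ compared with the statement. It implies the claimed inequality directly whenever $\Vert\calN(s)\Vert\leq 1$, which is the regime relevant for the fixed-point argument, where $s$ will be small.

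Without that restriction, I do not see how to remove the factor. A cruder bound via the triangle inequality gives
\begin{align*}
\Vert M^{-1}\Vert+\Vert (M-E)^{-1}\Vert\leq \frac{1}{\sigma_{\min}}+\frac{1}{\sigma_{\min}-\Vert E\Vert},
\end{align*}
and this does not dominate the stated expression in general. A counterexample is the scalar case with $\sigma_{\min}$ large and $\Vert E\Vert$ close to $\sigma_{\min}$.

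I would therefore present the sharper estimate from Step 3 as the actual content of the lemma. I would then remark that it yields the displayed bound under the additional harmless assumption $\Vert\calN(s)\Vert\leq1$, or else record the statement with the factor $\Vert\calN(s)\Vert$ in the numerator, which is what the later continuity argument needs anyway.
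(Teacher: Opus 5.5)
Your Steps 1--3 are correct and prove
\[
\Vert \calN(\gamma)^{-1}-\calN(\gamma-s)^{-1}\Vert\le \frac{\Vert\calN(s)\Vert}{\sigma_{\min}\bigl(\sigma_{\min}-\Vert\calN(s)\Vert\bigr)}.
\]
This is exactly what the paper's own proof arrives at: its last line is $\Vert\Delta A\Vert/(\varsigma(\varsigma-\Vert\Delta A\Vert))$. It is also the form used in the proof of Theorem~\ref{th:mainresult}, where the numerator $2M\Vert s\Vert$ appears. The printed statement has simply lost the numerator.

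In Step 4 you can be definite rather than hedging: the factor cannot be removed, because in the scalar case your Step 3 is an equality. Take Example~\ref{ex:1} with $m=k=1$, $\ell(x)=x$, $\mathcal{B}=0$, so that $\calN(u)d=u_1d_2$. Then $\gamma=(10,0)$, $s=(5,0)$ give a left-hand side of $\tfrac{1}{10}$ against a right-hand side of $\tfrac{1}{50}$. More generally, replacing $\eta$ by $\lambda\eta$ scales the left side by $\lambda^{-1}$ but the right side by $\lambda^{-2}$. So your scalar example refutes the displayed inequality itself, not just the triangle-inequality attempt.

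Of your two proposed repairs, only the second is adequate. The displayed bound under $\Vert\calN(s)\Vert\le 1$ is true but useless downstream. When estimating $\alpha(w)-\alpha(w-s)$, the difference of inverses is multiplied by $\Vert\eta(w,w)\Vert\le M\Vert w\Vert^2$, while $\sigma_{\min}(\calN(w))$ is only of order $\Vert w\Vert$. Without the factor $\Vert\calN(s)\Vert$, that term is $O(1)$ rather than $O(\Vert s\Vert)$ and cannot be pushed below $\varepsilon$.

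Your route also differs from the paper's. The paper uses the fundamental theorem of calculus with the derivative of inversion:
\[
\calN(\gamma-s)^{-1}-\calN(\gamma)^{-1}=-\int_0^1 (A+\theta\Delta A)^{-1}\Delta A\,(A+\theta\Delta A)^{-1}\,\mathrm{d}\theta,
\]
with $A=\calN(\gamma)$ and $\Delta A=-\calN(s)$. It bounds each inverse along the segment by $(\varsigma-\theta\Vert\Delta A\Vert)^{-1}$, which is your Step 1 estimate applied at every $\theta$, and then integrates.

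Your resolvent identity reaches the same constant algebraically in one line and needs invertibility only at the two endpoints. Through Step 1 it also proves explicitly that $\calN(\gamma-s)$ is invertible when $\Vert\calN(s)\Vert<\sigma_{\min}$. The paper's proof presupposes that invertibility, in fact along the whole segment. Yet the proof of Theorem~\ref{th:mainresult} cites this lemma precisely to guarantee it when arguing continuity of $\varphi$. The integral route gains nothing here beyond being the template for perturbing more general smooth matrix functions.
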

\begin{proof}
    Let $\mathrm{inv}$ be the map that maps a matrix onto its inverse. It is well known that $\mathrm{inv}'(A)B= - A^{-1}BA^{-1}$. The fundamental theorem of calculus implies
    \begin{align*}
        \mathrm{inv}(A+\Delta A) = \mathrm{inv}(A) + \int_0^1 \mathrm{inv}'(A+\theta\Delta A) \Delta A \, \mathrm{d}\theta = A^{-1} - \int_0^1 (A+\theta\Delta A)^{-1} \Delta A (A+\theta\Delta A)^{-1} \, \mathrm{d}\theta.
    \end{align*}
    Hence, with $\varsigma$ the smallest singular value of $A$
    \begin{align*}
        \Vert (A+\Delta A)^{-1} - A^{-1}\Vert &\leq \int_0^1 \frac{\Vert \Delta A \Vert}{\Vert A+ \theta\Delta A\Vert^2} \, \mathrm{d}\theta \leq \int_0^1 \frac{\Vert \Delta A \Vert}{(\varsigma - \theta\Vert\Delta A\Vert)^2} \, \mathrm{d}\theta = \frac{\|\Delta A\|}{\varsigma(\varsigma - \Vert \Delta A\Vert)}.
    \end{align*}
    where the last equation is simple calculus. Now, we note that the linearity of $\calN$ implies that $\calN(\gamma-s) = \calN(\gamma)-\calN(s)$. Hence, applying the above for $A=\calN(\gamma)$ and $\Delta A = \calN(s)$ yields the claim.
\end{proof}

We now have all the tools to prove the main theorem.

\begin{proof}[Proof of Theorem \ref{th:mainresult}] As advertised, our strategy is to apply the Brouwer fixed point theorem to the map $\varphi$ in \eqref{eq:fixedpoint}.  
Let $B$ be constructed as in Lemma \ref{lem:semiinverse}, and introduce the notations
\begin{align*}
    \varrho(u,v) = B\sigma(u,v), \quad s(u) = \varrho(u,u).
\end{align*}
We then have $\varphi(u) = \Psi(w-s(u))$. Let us also note that since $\eta$ and $\varrho$ are bilinear, there exists $M,L\geq 0$ with 
\begin{align}
    \Vert\varrho(u,v)\Vert \leq L\Vert u\Vert \cdot \Vert v\Vert, \quad \Vert\eta(u,v)\Vert \leq M\Vert{u}\Vert \cdot \Vert v\Vert. \label{eq:bounds}
\end{align}
Furthermore, since $\calN(w)$ is invertible, and linearly dependent on $w$, there must exist a $\delta>0$ so that 
\begin{align} \label{eq:lowerbound}
    \sigma_{\min}(\calN(w))\geq \delta \Vert w \Vert.
\end{align}
Without loss of generality, we can assume that $\delta\leq 1$.  Now, for an $\varepsilon>0$ we decide later; define
\begin{align*}
    S_\epsilon = \{ u \, \vert \,  \Vert u - \Psi(w)\Vert \leq \varepsilon \}.
\end{align*}
We claim that if $w$ is small enough and $\varepsilon$ is chosen small enough, $\varphi$ is continuous on $S_\epsilon$, and maps it onto itself.

As an auxiliary result, let us prove that there exists a $K\geq 0$ so that
\begin{align} \label{eq:sbound}
    \| s (u) \| \leq K(\|w\|^2 + \varepsilon^2).
\end{align}
for all $u\in S_\epsilon$. To simplify the notation, let us write $u = \Psi(w)+r$. We now use \eqref{eq:bounds} and \eqref{eq:lowerbound} to estimate
\begin{align*}
    \|s(u)\| &= \|\varrho(u,u)\| \leq L\|u \|^2 = L\|\Psi(w)+r\|^2 = L\|w - \alpha(w)+r\|^2 \\
    &\leq 3L (\|w\|^2 + \|\alpha(w)\|^2+\|r\|^2) \leq 3L(\| w\|^2 + \|\calN(w)^{-1} \|^2 \cdot \|\eta(w,w)\|^2 + \varepsilon^2) \\
    & \leq 3L(\|w\|^2 +\delta^{-2}\|{w}\|^{-2}\cdot M^2 \|w\|^4 + \varepsilon^2) \leq K(\|w\|^2 + \varepsilon^2)
\end{align*}
for a properly defined $K$, i.e. \eqref{eq:sbound}.

Let us now get to the bulk of the proof. In order to show that $\varphi$ is continuous on $S_\epsilon$, let us note that
\begin{align*}
    \varphi(u) = \Psi(w-s(u))= (w-s(u))-\calN(w-s(u))^{-1}\eta(w-s(u),w-s(u)).
\end{align*}
It hence suffices to argue that $\calN(w-s(u))= \calN(w)-\calN(s(u))$ is invertible for all $u \in S_\epsilon$. According to Lemma \ref{lem:inverse}, it suffices to show that $\|{\calN(s(u))}\|\leq\sigma_{\min}(\calN(w))=\delta \|w\|$. But combining \eqref{eq:bounds} with the definition of $\calN$ yields $\| \calN(w)\| \leq 2 M\|w\|$ for any $w$, so that 
\begin{align*}
    \|{\calN(s(u))}\| \leq 2M \|s(u)\| \leq 2KM(\|w\|^2+\varepsilon^2),
\end{align*}
which is smaller than $\delta \|w\|/2$ if 
\begin{align} \label{eq:first_specifications}
    \| w \|\leq \frac{\delta}{8KM}, \quad \varepsilon^2\leq \frac{\delta \|w\|}{8KM}.
\end{align}
which is true for $w$ and $\varepsilon$ small enough.

To show that $\varphi$ maps $S_\epsilon$ onto $S_\epsilon$, we need to show that $\Vert{\varphi(u)-\Psi(w)}\Vert<\varepsilon$. We have
\begin{align*}
    \varphi(u)-\Psi(w) &= \Psi(w-s(u))-\Psi(w) = \left(w-s -\alpha(w-s)\right)- \left(w-\alpha(w)\right) \\
    &= \alpha(w)-\alpha(w-s)-s.
\end{align*}
where we wrote $s=s(u)$ to ease the notation somewhat. A short calculation shows that 
\begin{align*}
    \alpha(w)- \alpha(w-s) = (\calN(w)^{-1}-\calN(w-s)^{-1})\eta(w,w) - \calN(w-s)^{-1}(\eta(w,s)+\eta(s,w) - \eta(s,s)).
\end{align*}
With the help of Lemma \ref{lem:inverse} and the bound $\|\calN(s)\|\leq 2M\|s\|\leq \delta \|w\|/2$, we obtain 
\begin{align*}
    \| \calN(w)^{-1}-\calN(w-s)^{-1}\| &\leq \frac{ 2M\|s\|}{\delta \|w\| (\delta \|w\| - M\|s\|)} \leq \frac{ 4M\|s\|}{\delta^2 \|w\|^2} , \\
    \| \calN(w-s)^{-1}\| &\leq \| \calN(w)^{-1}-\calN(w-s)^{-1}\| + ||\calN(w)^{-1}|| \\
    &\leq \frac{ 4M\|s\|}{\delta^2 \|w\|^2 (\delta \|w\| - 2M\|s\|)} + \frac{1}{\delta \|w\|} = \frac{1}{\delta \|w\| - 2M\|s\|} \leq \frac{ 2}{\delta \|w\|}.
\end{align*}
   Consequently,
    \begin{align*}
        \|\alpha(w)- \alpha(w-s)\| &\leq \frac{ 4M\|s\|\| \eta(w,w)\|}{\delta^2 \|w\|^2 }  + \frac{4\| \eta(w,s)\|  + 2\| \eta(s,s)|\|}{\delta \|w\| }  \\
        &\leq \frac{ 4M^2\|s\|}{\delta^2} + \frac{8M\|w\| \|s\|  + 2M \|s\|^2 }{\delta \|w\|}  \leq 4M^2\delta^{-2}\|s\| + 8M\delta^{-1}\|s\| + M\|s||,
    \end{align*}
    where we in the final step used the bound $2M||s||\leq \delta \|w\|/2$. By using the bound \eqref{eq:sbound}, and $\delta\leq 1$ we see that this smaller than
    \begin{align*}
        R(1+\delta^{-2})(\|w\|^2+\varepsilon^2)
    \end{align*}
    for some constant $R$. It is a simple algebraic exercise to show that the expression above is smaller than $\varepsilon$ as long as it is chosen in the interval
    \begin{align*}
         \bigg] \frac{1}{2R(1+\delta^{-2})}- \sqrt{ \frac{1}{4R^2(1+\delta^{-2})} -\|w\|^2}, \frac{1}{2R(1+\delta^{-2})}+\sqrt{ \frac{1}{4R^2(1+\delta^{-2})}- \|w\|^2}\bigg[.
    \end{align*}
    For small $\|w\|$, the left boundary is approximately equal to 
\begin{align*}
    \frac{\|w\|^2}{2R(1+\delta^{-2})}.
\end{align*}
This shows that there are $\varepsilon$ that both is contained in the above interval and fulfills \eqref{eq:first_specifications}. Hence, there exists an $\varepsilon$ so that $\varphi$ maps $S_\epsilon$ onto $S_\epsilon$, and the claim is proven.
\end{proof}

\section{Numerical Example}

\begin{figure}
    \centering
    \includegraphics[width=0.49\linewidth]{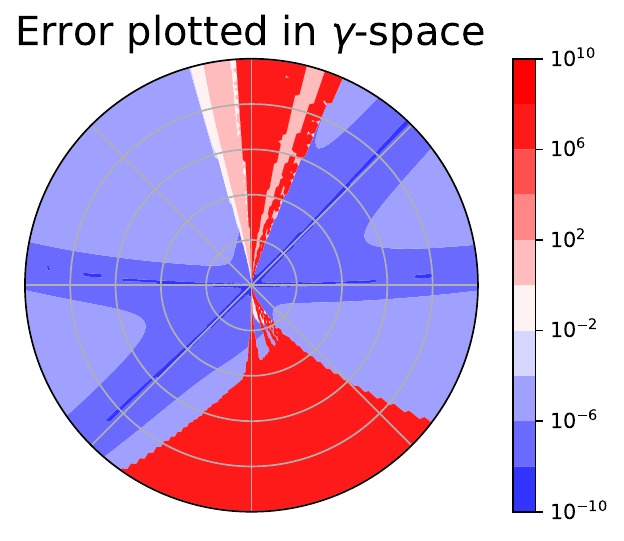} \includegraphics[width=.49\linewidth]{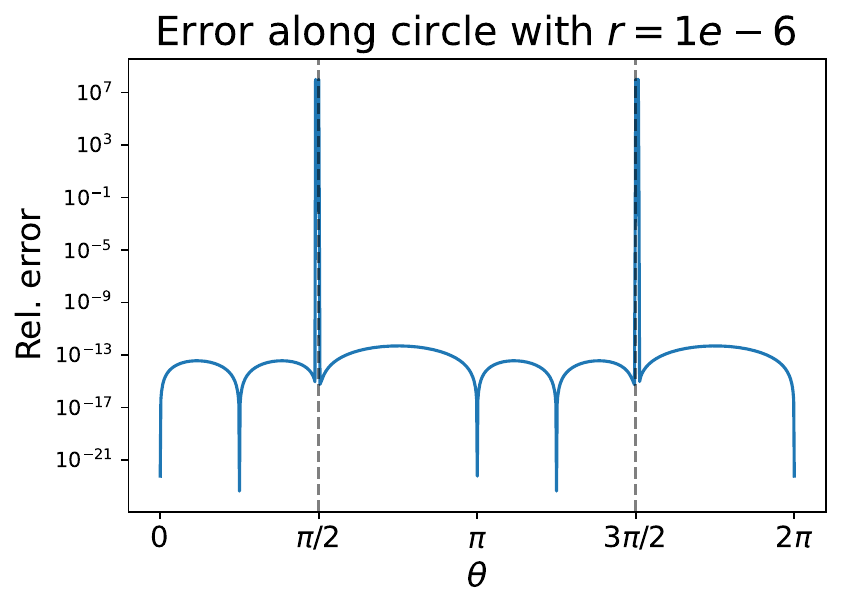}
    \caption{Left: Relative error at iteration $3000$ for different values of $\gamma$ on a circle with radius $5e-2$. Notice that the color depends logarithmically on the error. Best viewed in color. There are regions where no solution is found, but this region is very thinly concentrated around the axis $\gamma[1]=0$ close to the origin. Right: The error in dependence of the angle relative to the $x$-axis for $\gamma$ on a circle of radius $1e-6$. In both plots, the relative error has been capped upwards at $1e8$ and downwards at $1e-11$, to make the figure more legible.}
    \label{fig:errorplots}
\end{figure}

 We perform a small numerical experiment to illustrate Theorem \ref{th:mainresult}, in particular the condition (ii).
 
 \paragraph{Setup} We adopt the setting of Example \ref{ex:1}, with the following concrete choices: $m=k=2$, $A_0\in \R^{4,2}$ a random Gaussian matrix and $\sigma_i(u,u)=\langle{u,W_iu}\rangle$ for random matrices $W_i\in \R^{4,4}$ drawn from the Wishart distribution $W(I,4,8)$ \cite{Eaton2007}. The linear form is chosen as $\ell(u)=\langle{u,e_1}\rangle$, and the bilinear form $\mathcal{B}$ is defined through $\mathcal{B}(u_1,v_1) = \langle{u_1,e_2}\rangle\cdot \langle{v,e_2}\rangle\cdot[1,1]$

\paragraph{Experiment description}The proof of Theorem \ref{th:mainresult} gives us a clear strategy to solve the system \eqref{eq:mainlin1}, \eqref{eq:mainlin2}-- simply apply a fixed-point iteration $u_{k+1}=\varphi(u_k)$, started at the initial solution $u_0=Bc$. We here choose $B$ as the Moore-Penrose inverse. We perform $3000$ such iterations for values of $\gamma \in \R^2$ (remember the notation $c=A_0\gamma$) in a circle of radius $5e-2$ about the origin, and record the relative error
\begin{align*}
    \text{Rel. Error} = \frac{\norm{Au+\sigma(u,u)-c}^2+\norm{\eta(u,u)}^2}{\norm{u}^2}
\end{align*}
at the final iteration. The code used in the experiments will be made available upon request. 

\paragraph{Results} The results are plotted in Figure \ref{fig:errorplots} (left). We see that the fixed point iterations finds a solution for many values of $\gamma$, in particular close to the origin. However, there is a problematic region, which appearantly contains the $y$-axis, which is exactly the set where $\ell(\gamma)=0$ -- i.e., as predicted by our theorem. To make this point clearer, we solve the equation for $\gamma$ along a circle with radius 1e-6. The error in dependence of the angle relative to the $x$-axis is shown in Figure \ref{fig:errorplots} (right). We see that the error explodes occur exactly at $\theta = \pi/2$ and $\theta=3\pi/2$, which is exactly where our theorem predicts it will.
In short, the results are exactly as our theorem predicts.

\section{Conclusion}
In this manuscript, we investigated the existence of a solution to a system of singular linear-quadratic equations. Our main result establishes that under two compatibility conditions of the bilinear and linear terms, a solution exists as soon as the right-hand side is small enough. We illustrated the result with a small numerical experiment. The condition seems to not have been recorded in the literature before. However, it is only a necessary condition, and hence does not completely characterize when a solution exists. More research is hence necessary.

\subsection*{Acknowledgement} Rishikesh Yadav's research was supported by Kempestiftelserna through the project grant ``Iterative algorithms for regularized optimization on measure space'', and both he and Axel Flinth acknowledges support from them. Axel Flinth in addition, acknowledges support from the Wallenberg AI, Autonomous Systems and Software Program (WASP) funded by the Knut and Alice Wallenberg Foundation.



\bibliographystyle{amsplain}
\bibliography{reference}

\end{document}